\newtheorem{prelemmaa}{{\bf LEMMA}}
\newtheorem{prelem}{{\bf THEOREM}}
\newtheorem{preque}{{\bf QUESTION}}
\newtheorem{theorem}{THEOREM}
\newtheorem{prelemma}{LEMMA}
\newenvironment{lemma}{\begin{prelemma}{\hspace{-0.5
               em}}}{\end{prelemma}}
\newtheorem{preproof}{{\bf PROOF.}}
\newenvironment{proof}[1]{\begin{preproof}{\rm
               #1}\hfill{\rule[-0.5mm]{2mm}{2mm}}}{\end{preproof}}
\newtheorem{preproposition}{{PROPOSITION}}
\newenvironment{proposition}{\begin{preproposition}{\hspace{-0.5
                em}}}{\end{preproposition}}
\newtheorem{preremark}{REMARK}
\newtheorem{precorollary}{{COROLLARY}}
\newtheorem{predefinition}{DEFINITION}
\newtheorem{preexample}{EXAMPLE}
\newtheorem{preconjecture}{{CONJECTURE}}
\newtheorem{pretheo}{{\bf THEOREM}}
\def\newpic#1{}
\date{}
\begin{document}
\title{\bf Smallest defining sets of super-simple $2-(v,4,1)$ directed designs}
\author{
{\sc  Nasrin Soltankhah\footnote{ Corresponding  author. E-mail
address: soltan@alzahra.ac.ir} AND Farzane Amirzade}
 \\[5mm]
Department of Mathematics\\ Alzahra University  \\
Vanak Square 19834 \ Tehran, I.R. Iran }
%
%\begin{document}
\maketitle
\begin{abstract}
A $2-(v,k,\lambda)$ directed design (or simply a
$2-(v,k,\lambda)DD$) is super-simple if its underlying
$2-(v,k,2\lambda)BIBD$ is super-simple, that is, any two blocks of
the $BIBD$ intersect in at most two points. A $2-(v,k,\lambda)DD$ is
simple if its underlying  $2-(v,k,2\lambda)BIBD$ is simple, that is,
it has no repeated  blocks.

 A set of blocks which is a subset of a
unique $2-(v,k,\lambda)DD$ is said to be a defining set of the
directed design. A smallest defining set, is a defining set which
has smallest cardinality. In this paper simultaneously  we show that
the necessary and sufficient condition for the existence of a
super-simple $2-(v,4,1)DD$ is $v\equiv1\ ({\rm mod}\ 3)$ and for
these values except $v=7$ , there exists a super-simple
$2-(v,4,1)DD$ whose smallest defining sets have at least a half of
the blocks. And also for all $\epsilon > 0$ there exists
$v_0(\epsilon)$ such that for all admissible $v>v_0$ there exists a
$2-(v,4,1)DD$ whose smallest defining sets have at least
$(\frac{5}{8}-\frac{c}{v})\mid \mathcal{B}\mid$ blocks, for suitable
positive constant c.
 \end{abstract}
% \vspace{3mm}\\
%{\bf AMS subject classification: 05} \\
%
\hspace*{-2.7mm} {\bf KEYWORDS:} {  \sf super simple directed designs, smallest
defining set }
%
%%%%%%%%%%%%%%%%%%%%%%%%%%%%%%%%%%%%%%%%%%%%%%%%%%%%%%%%%%%%%%%%%%
\section{Introduction} %sec 1 \setcounter{theorem}{0}
\setcounter{preproposition}{0} \setcounter{precorollary}{0}
\setcounter{prelemma}{0} \setcounter{preexample}{0}
A $t-(v,k,\lambda)$ directed design (or simply a
$t-(v,k,\lambda)DD$) is a pair $(V,\mathcal{B})$, where $V$ is
$v$-set, and $\mathcal{B}$ is a collection of  ordered $\it
k-$tuples of distinct elements of $V$ (called blocks), such that
each ordered $t$-tuple of distinct elements of $V$ appears in
precisely $\lambda$ blocks. We say that a $t$-tuple appears in a
$\it k$-tuple, if its components appear in that $k$-tuple as a set,
and they appear with the same order. For example the following
4-tuples form a $2-(10,4,1)DD, \mathcal{D}$.
$$
\begin{array}{llllllll}
0132 & 0467 & 0589 & 2680 & 2791 & 7843 & 6942 & 4510 \\
1498 & 3861 & 3970 & 8752 & 9653 & 2354 & 1576.
\end{array}
$$
Here, for example, the 4-tuple 0132 contains the ordered pairs 01,
03, 02, 13, 12, and 32.

 A $(v,k,t)$ directed trade of volume $s$ consists of two
disjoint collections
 $T_1$ and $T_2$, each of
$s$ blocks, such that every $t$-tuple of distinct elements of $V$ is
covered by precisely the same number of blocks of $T_1$ as of $T_2$.
Such a directed trade is usually denoted by $T=T_1-T_2$. Blocks in
$T_1(T_2)$ are called the positive (respectively, negative) blocks
of T. If ${\mathcal{D}}=(V,\mathcal{B})$ is a directed design, and
if $T_1\subseteq \mathcal{B}$, we say that $\mathcal{D}$ contains
the directed trade T. For example the $2-(10,4,1)DD, \mathcal{D}$
above contains the following directed trade:
$$
\begin{array}{ccccccc}
\frac{\ \sc T_1\ }{\ }  &      & \frac{\ \sc T_2\ }{\ }  \\
0132                    &      & 1032                \\
4510                    &      & 4501
\end{array}
$$
Given a $t-(v,k,\lambda)$ directed design $\mathcal{D}$, a subset of
the blocks of $\mathcal{D}$ that occurs in no other
$t-(v,k,\lambda)$ directed design is called a defining set of
$\mathcal{D}$. A defining set for which no other defining set has a
smaller cardinality, is called a smallest defining set. For example
the set of blocks $S=\{0132,2354,9653,8752,2791,2680,1498,1576\}$ is
a defining set of the $2-(10,4,1)DD$ above. But the set of blocks
$R=\{0132,2354,9653,8752, 2791,\\2680,1498,0467\}$ can be completed
to a $2-(10,4,1)DD$ in two ways: by adjoining either
$\{4510,1576,0589,3861,3970,7843,6942\}$ or
$\{4150,5176,0589,\\3861,3970,7843,6942\}$.

Defining sets for directed designs (as suggested by A. P. Street in
$\cite{A.P.Street}$) are strongly related to trades. This relation
is illustrated by the following result.
\begin{proposition}~{\rm \cite{MMS}} Let $\mathcal{D}=(V,\mathcal{B})$ be a $t-(v,k,\lambda)$ directed design and let $S\subseteq
\mathcal{B}$, then $S$ is a defining set of $\mathcal{D}$ if only if
$S$ contains a block of every $(v,k,t)$ directed trade $T=T_1-T_2$
such that $T$ is contained in $\mathcal{D}$.
\end{proposition}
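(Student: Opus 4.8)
The plan is to prove both implications by contraposition, exploiting the standard correspondence between ``alternative'' directed designs on the point set $V$ and directed trades contained in $\mathcal{D}$. First I would record two elementary facts. (i) If $T=T_1-T_2$ is a directed trade with $T_1\subseteq\mathcal{B}$, then $\mathcal{B}':=(\mathcal{B}\setminus T_1)\cup T_2$ is the block collection of a $t-(v,k,\lambda)$ directed design $\mathcal{D}'$ with $\mathcal{D}'\neq\mathcal{D}$. (ii) If $\mathcal{D}'=(V,\mathcal{B}')$ is a $t-(v,k,\lambda)$ directed design with $\mathcal{D}'\neq\mathcal{D}$, then $T_1:=\mathcal{B}\setminus\mathcal{B}'$ and $T_2:=\mathcal{B}'\setminus\mathcal{B}$ form a nonempty directed trade $T=T_1-T_2$ that is contained in $\mathcal{D}$.

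For (i), as one passes from $\mathcal{B}$ to $\mathcal{B}'$ each ordered $t$-tuple loses as many covering blocks (those of $T_1$, which do lie in $\mathcal{B}$) as it gains (those of $T_2$), by the defining property of a directed trade, so every ordered $t$-tuple is still covered exactly $\lambda$ times; and $\mathcal{D}'\neq\mathcal{D}$ because a block of $T_1$ occurs in $\mathcal{B}$ but, as $T_1\cap T_2=\emptyset$, with strictly smaller multiplicity in $\mathcal{B}'$ (here $T_1\neq\emptyset$, since trades are taken to have positive volume). For (ii), the blocks common to $\mathcal{B}$ and $\mathcal{B}'$ cover each ordered $t$-tuple the same number of times on both sides, so the residual collections $T_1$ and $T_2$ cover each ordered $t$-tuple equally; summing that equality over all ordered $t$-tuples and using that every block (an ordered $k$-tuple of distinct points) covers exactly $\binom{k}{t}$ ordered $t$-tuples gives $|T_1|=|T_2|$, so $T=T_1-T_2$ is indeed a directed trade; it is contained in $\mathcal{D}$ since $T_1\subseteq\mathcal{B}$, and nonempty since $\mathcal{D}'\neq\mathcal{D}$.

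Granting (i) and (ii), both directions follow at once, arguing contrapositively in each case. For the ``only if'' direction: if $S$ contains no block of $T_1$ for some directed trade $T=T_1-T_2$ contained in $\mathcal{D}$, that is, $S\cap T_1=\emptyset$, then with $\mathcal{D}'$ as in (i) we have $S\subseteq\mathcal{B}\setminus T_1\subseteq\mathcal{B}'$, so $S$ occurs in the directed design $\mathcal{D}'\neq\mathcal{D}$ and hence is not a defining set of $\mathcal{D}$. For the ``if'' direction: if $S$ is not a defining set of $\mathcal{D}$, there is a $t-(v,k,\lambda)$ directed design $\mathcal{D}'=(V,\mathcal{B}')\neq\mathcal{D}$ with $S\subseteq\mathcal{B}'$; taking $T=T_1-T_2$ as in (ii) we obtain a directed trade contained in $\mathcal{D}$, and since $S\subseteq\mathcal{B}\cap\mathcal{B}'$ we get $S\cap T_1=S\cap(\mathcal{B}\setminus\mathcal{B}')=\emptyset$, so $S$ contains no block of $T_1$ for this trade.

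The argument is pure bookkeeping, so I expect no serious obstacle; the points that actually need care are, in (ii), the double count establishing $|T_1|=|T_2|$ and the verification that $T_1$ and $T_2$ balance every ordered $t$-tuple, and---when repeated blocks are allowed in a directed design---reading ``$\setminus$'' and ``$\cup$'' as multiset operations and interpreting ``$S$ contains a block of $T_1$'' with the appropriate multiplicities. For the super-simple designs studied in this paper no block is repeated, and these subtleties disappear.
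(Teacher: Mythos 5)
The paper states this proposition without proof, merely citing \cite{MMS}, so there is no in-paper argument to compare against; your proof is correct and is the standard one. Both of your auxiliary facts are sound --- replacing $T_1$ by $T_2$ preserves the covering multiplicity of every ordered $t$-tuple, and the symmetric difference of two $t-(v,k,\lambda)$ directed designs on the same point set balances every ordered $t$-tuple, with the double count via the $\binom{k}{t}$ ordered $t$-tuples per block correctly yielding equal volumes --- and you rightly flag the multiset reading needed when repeated blocks are allowed.
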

Each defining set of a $t-(v,k,\lambda)DD, \mathcal{D}$ contains at
least one block in every trade in $\mathcal{D}$. In particular, if
$\mathcal{D}$ contains $m$ mutually disjoint directed trades then
the smallest defining set of $\mathcal{D}$ must contain at least $m$
blocks.

The concept of directed trades and defining sets for directed
designs were investigated in articles $\cite{MMS, trade}$.

A $2-(v,k,\lambda)$ directed design (or simply a
$2-(v,k,\lambda)DD$) is super-simple if its underlying  $2-(v,k,2\lambda)BIBD$ is super-simple, that is,
any two blocks of the $BIBD$ intersect in at most two points. A $2-(v,k,\lambda)DD$ is simple if its underlying  $2-(v,k,2\lambda)BIBD$ is simple, that is,
it has no repeated  blocks.

The concept of super-simple designs was introduced by Gronau and
Mullin $\cite{gronau1992}$. The existence of super-simple designs is
an interesting problem by itself, but there are also some useful
applications.

There are known results for the existence of super simple designs,
especially for existence of super simple $2-(v,k,\lambda)BIBDs$.
When $k=4$  the necessary conditions for super simple
 $2-(v,k,\lambda)BIBDs$ are known to be sufficient for $2\leq \lambda \leq 6$ with few
 possible exceptions. These known results can be found in articles
 $\cite{Adams, Cao, Chen1995, Chen1996, Chen2005, gronau2007, Khodkar}$.

In $\cite{MMS}$, Mahmoodian, Soltankhah and Street have proved that
if $\mathcal{D}$ be a $2-(v,3,1)DD$ then a defining set of
$\mathcal{D}$ has at least $\frac{v}{2}$ blocks. In $\cite{Quinn}$
Grannel,  Griggs and Quinn have shown that for each admissible value
of $v$, there exists a $2-(v,3,1)DD$ and a simple $2-(v,3,1)DD$
whose
 smallest defining sets have at least $\frac{\mid
\mathcal{B}\mid}{2}$ blocks. And they have also proved that, for all
$\epsilon>0$ and all sufficiently large admissible $v$, there exists
a $2-(v,3,1)DD$ whose smallest defining sets have at least
$(\frac{2}{3}-\epsilon )\mid \mathcal{B}\mid$ blocks.

In this paper we show that for all admissible values, there is a
 super-simple $2-(v,4,1)DD$ (except
$v=7$) whose smallest defining sets have at least a half of the
blocks.

In other words we are interested in the quantity
\begin{center}
$\large f=\frac{{\rm number}\ {\rm of}\ 4-{\rm tuples}\ {\rm in}\
{\rm a}\ {\rm smallest}\ {\rm defining}\ {\rm set}\ {\rm in}\
\mathcal{D}} {{\rm number}\ {\rm of}\ 4-{\rm tuples}\ {\rm in}\
\mathcal{D}}.$
\end{center}
And we show that, there exists a super-simple $2-(v,4,1)DD$,
$\mathcal{D}$ with $f\geq\frac{1}{2}$.

The proofs in this paper use various types of combinatorial objects.
The definitions of these objects are either given in the paper or
can be found in the references.

Several proofs depend on the following result, which involves
pairwise balanced designs (PBDs) and is a special case of a result
(the Replacement Lemma $\cite{MMS}$) that is used in several earlier
papers on directed designs.
\begin{lemma}
If there exist a $2-(v,K,1)$ design and a super-simple $(k,4,1)DD$
for each $k\in K$, then there exists a super-simple $2-(v,4,1)DD$.
\end{lemma}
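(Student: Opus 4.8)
The plan is to build the required directed design by replacing each block of the pairwise balanced design with a small directed design on its point set. Write $(V,\mathcal{A})$ for the given $2-(v,K,1)$ design, so that every unordered pair of points of $V$ lies in exactly one block $A\in\mathcal{A}$ and $|A|\in K$ for each $A$. For every block $A$ I would fix a super-simple $(|A|,4,1)DD$ — which exists by hypothesis since $|A|\in K$ — and identify its point set with $A$, obtaining a collection $\mathcal{B}_A$ of ordered $4$-tuples on $A$. Then I would take $\mathcal{B}=\bigcup_{A\in\mathcal{A}}\mathcal{B}_A$ and claim that $(V,\mathcal{B})$ is a super-simple $2-(v,4,1)DD$.

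First I would check the design property. Given an ordered pair $(x,y)$ of distinct points of $V$, the unordered pair $\{x,y\}$ lies in a unique block $A\in\mathcal{A}$; inside $A$ the directed design $\mathcal{B}_A$ covers $(x,y)$ exactly once (the case $\lambda=1$ of a $2-(k,4,1)DD$), while for any other block $A'$ no tuple of $\mathcal{B}_{A'}$ can contain both $x$ and $y$ because $\{x,y\}\not\subseteq A'$. Hence $(x,y)$ lies in precisely one tuple of $\mathcal{B}$, so $(V,\mathcal{B})$ is a $2-(v,4,1)DD$.

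Next I would verify super-simplicity, that is, that any two $4$-subsets coming from tuples of $\mathcal{B}$ meet in at most two points. Let $B_1$ arise from $\mathcal{B}_{A_1}$ and $B_2$ from $\mathcal{B}_{A_2}$. If $A_1=A_2$, then $B_1$ and $B_2$ are blocks of the super-simple $(|A_1|,4,1)DD$ on $A_1$ and so meet in at most two points by hypothesis. If $A_1\ne A_2$, then $B_1\subseteq A_1$ and $B_2\subseteq A_2$ give $B_1\cap B_2\subseteq A_1\cap A_2$, and two distinct blocks of a $2-(v,K,1)$ design share at most one point (a common pair would lie in two blocks, violating $\lambda=1$); thus $|B_1\cap B_2|\le 1$. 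In either case the intersection has at most two points, so the underlying $2-(v,4,2)$ design is super-simple, completing the argument.

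I do not anticipate a real obstacle: this is a direct instantiation of the Replacement Lemma, and the only slightly delicate point is the cross-block intersection bound, which rests solely on the linearity ($\lambda=1$) of the pairwise balanced design — distinct blocks meeting in at most one point forces the blown-up $4$-tuples coming from different blocks to share at most one point, which is well within the super-simple threshold.
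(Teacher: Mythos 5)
Your proof is correct and follows exactly the paper's approach: the paper's own proof is the single sentence that replacing each block of the $2-(v,K,1)$ design with a copy of a super-simple $2-(k,4,1)DD$ on its points gives the result. You have merely spelled out the two routine verifications (each ordered pair is covered once by the uniqueness of the containing PBD block, and cross-block $4$-tuples meet in at most one point since distinct blocks of a linear PBD share at most one point), which the paper leaves implicit.
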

\begin{proof}
{Replacing each block of the $2-(v,K,1)$ design with a copy of a
super-simple $2-(k,4,1)DD$ with point set the points of that block
gives a super-simple $2-(v,4,1)DD$.}
\end{proof}
A lower bound for $\it f$ for the super-simple $2-(v,4,1)DD$ constructed in Lemma
1.1 can be calculated from lower bounds for $\it f$ for the various
super-simple $2-(k,4,1)DD$s. In particular, if there is a constant c such that
each of the super-simple $2-(k,4,1)DD$s has $f\geq c$, then the resulting super-simple
$2-(v,4,1)DD$ also has $f\geq c$.
%
%%%%%%%%%%%%%%%%%%%%%%%%%%%%%%%%%%%%%%%%%%%%%%%%%%%%%%%%%%%%%%%
\section{Main result}  %sec
A necessary and sufficient condition for the existence of a
$2-(v,4,1)DD$ is $v\equiv1\ ({\rm mod}\ 3)$ $\cite{DBIBD}$.

In this section simultaneously  we show that the necessary and
sufficient condition for the existence of a super-simple $2-(v,4,1)DD$
is $v\equiv1\ ({\rm mod}\ 3)$ and for these values except $v=7$,
there exists a super-simple $2-(v,4,1)DD$ with $f\geq\frac{1}{2}$.

In
$\cite{soltan}$, Soltankhah and Mahmoodian have shown that, up to
isomorphism, there exist two $2-(7,4,1)DDs$  that they are super-simple and each of them
has smallest defining set of cardinality 2. Consequently, if $v=7$
then $f=\frac{2}{7}$.

In proofs we handle with a special type of directed trade, named a
cyclical trade, as follows.

Let $T=T_1-T_2$ be a $(v,4,2)$ directed trade of volume $s$, where
$T_1$ contains blocks $b_1,\dots,b_s$ such that each pair of
consecutive 4-tuples (blocks) of $T_1$, $b_ib_{i+1}$ $i=1,\dots,s\
({\rm mod}\ s)$ is a trade of volume 2. Therefore if a directed
design $\mathcal{D}$ contains $T_1$, then any defining set for
$\mathcal{D}$ must contain at least $[\frac{s+1}{2}]$ blocks of
$T_1$.

The following
4-tuples form a super-simple $2-(10,4,1)DD$.
$$
\begin{tabular}{|c|c|c|c|c|c|c|} \hline
0467 & 0589 & 2680 & 2791 & 7843 & 6942 & 0132 \\
1576 & 1498 & 3861 & 3970 & 8752 & 9653 & 4510 \\
     &      &      &      &      &      & 2354 \\ \hline
\end{tabular}
$$
Each of the first six columns above contains a trade, hence any
defining set for this directed design must contain at least one
4-tuple of each column. The last column above is a cyclical trade of
volume 3. Hence any defining set for this directed design must
contain at least two 4-tuples of the last column. We can use integer
programming problem $\cite{STS}$ to find a smallest defining set for
this directed design and
$S=\{0132,2354,9653,8752,2791,2680,\\1498,1576\}$ is a defining set of
size 8. Therefore this super-simple $2-(10,4,1)DD$ has
$f=\frac{8}{15}>\frac{1}{2}$.

The following base blocks by $(+1\ {\rm mod}\ 13)$ form a super-simple
$2-(13,4,1)DD$ with $f\geq\frac{1}{2}$.
$$0\ 1\ 3\ 9\ ,\ 1\ 0\ 11\ 5$$
The following 4-tuples form a super-simple $2-(16,4,1)DD$.

$$
\begin{tabular}{|c|c|c|c|c|} \hline
1 3 2 4     & 1 5 10 14  & 1 6 12 13  & 3 8 10 15  & 1 8 11 16  \\
16 14 3 1   & 13 9 5 1   & 15 10 6 1  & 14 10 8 9  & 16 11 10 2 \\
            &            &            &            &            \\
5 6 7 8     & 2 6 11 15  & 2 5 9 16   & 4 7 11 14  & 2 7 10 13  \\
16 13 7 6   & 14 11 6 5  & 15 5 2 3   & 10 7 4 5   & 8 7 2 1    \\
            &            &            &            &            \\
9 10 12 11  & 3 7 12 16  & 2 8 12 14  & 1 7 9 15   & 3 6 9 14   \\
13 12 10 3  & 16 12 8 5  & 15 14 12 7 & 11 9 7 3   & 12 9 6 2   \\
            &            &            &            &            \\
13 14 15 16 & 4 9 8 13   & 3 5 11 13  & 4 6 10 16  & 5 4 12 15  \\
14 13 4 2   & 16 15 9 4  & 15 13 11 8 & 8 6 4 3    & 11 12 4 1  \\
\hline
\end{tabular}
$$

\noindent This super-simple $2-(16,4,1)DD$ contains 20 disjoint
directed trades of volume 2. Hence any defining set of this
super-simple directed design must contain at least one 4-tuple of
each trade. So for this super-simple $2-(16,4,1)DD$ we have
$f\geq\frac{20}{40}=\frac{1}{2}$.

In our future results we need some super-simple directed group
divisible designs with block size four (super-simple $4-DGDDs$). We
may use  a super-simple $4-DGDD$ of type $(3^4)$, a super-simple
$4-DGDD$ of type $(3^5)$, a super-simple $4-DGDD$ of type $(3^6)$
and a super-simple $4-DGDD$ of type $(2^4)$. Now we can construct a
super-simple $4-DGDD$ of type $(3^4)$ with groups $\{0,4,8\},\
\{1,5,9\},\ \{2,6,10\},\ \{3,7,11\}$, and with the following blocks.

$$
\begin{tabular}{|c|c|c|c|c|c|c|c|} \hline
4 3 10 9 & 9 2 3 8  & 2 1 4 11  & 11 10 5 4 & 7 6 0 1  \\
9 10 0 7 & 11 0 2 9 & 3 4 1 6   & 4 5 7 2   & 6 7 9 4  \\
1 0 10 3 & 3 2 0 5  &           &           &          \\
         &          & 8 1 2 7   & 5 0 6 11  & 7 8 5 10 \\
         &          & 10 11 1 8 & 8 9 11 6  & 6 5 8 3  \\
\hline
\end{tabular}
$$

\noindent Each of the first two columns above is a cyclical trade of
volume 3, hence any defining set of this super-simple directed group
divisible design must contain at least two 4-tuples from each of the
first two columns and 4-tuples in the last three columns above form
six disjoint directed trades of volume 2 and one 4-tuple from each
of them must be in any defining set. Therefore any defining set of
this super-simple $4-DGDD$ must contain at least 10 blocks, so it
has $f\geq\frac{10}{18}>\frac{1}{2}$.

A super-simple $4-DGDD$ of type $(3^5)$ can be constructed with groups
$$\{0,5,10\},\ \{1,6,11\},\ \{2,7,12\},\ \{3,8,13\},\ \{4,9,14\}$$ and
with the following base blocks by $(+1\ {\rm mod}\ 15)$.
$$1\ 0\ 3\ 7\ ,\ 0\ 1\ 13\ 9$$
This super-simple directed group divisible design has 15 disjoint
directed trades of volume 2 and one 4-tuple from each of them must
be in any defining set. Therefore any defining set of this super-simple $4-DGDD$ must
 contain at least 15 blocks, so it has
$f\geq\frac{15}{30}=\frac{1}{2}$.

A super-simple $4-DGDD$ of type $(3^6)$ can be constructed with groups
$$\{0,6,12\},\ \{1,7,13\},\ \{2,8,14\},\ \{3,9,15\},\ \{4,10,16\},\ \{5,11,17\}$$ and
with the following base blocks by $(+2\ {\rm mod}\ 18)$.

$$
\begin{array}{l}
5\ 10\ 2\ 0\ \ ,\ 14\ 1\ 10\ 5 \\
15\ 0\ 4\ 11\  ,\ 11\ 4\ 3\ 1\ \\
0\ 1\ 3\ 2
\end{array}
$$

This super-simple directed group divisible design has 18 disjoint
directed trades of volume 2 in the first two rows above and the last
row is a cyclical trade of volume 9. So any defining set for this
super-simple $4-DGDD$ must contain at least five 4-tuples of the last
row. Hence any defining set must contain at least $18+5=23$ blocks,
so for this super-simple $4-DGDD$, we have
$f\geq\frac{23}{45}>\frac{1}{2}$.

A super-simple $4-DGDD$ of type $(2^4)$  with $f\geq\frac{1}{2}$ can
be constructed with groups $\{1,2\},\ \{3,4\},\ $ $\{5,6\},\
\{7,8\}$ and with the following blocks

$$
\begin{array}{l}
4\ 1\ 6\ 7\ ,\ 8\ 6\ 1\ 3 \\
6\ 8\ 2\ 4\ ,\ 7\ 5\ 4\ 2 \\
5\ 7\ 3\ 1\ ,\ 2\ 3\ 7\ 6 \\
1\ 4\ 5\ 8\ ,\ 3\ 2\ 8\ 5.
\end{array}
$$

Our principal tool is to apply Wilson's Fundamental construction(weighting), that is described in the following
lemma.
\begin{lemma}
If there is a $\{K\}-GDD$ of type ${g_1}^{u_1}{g_2}^{u_2}\dots {g_N}^{u_N}$, there are  super-simple $2-(\alpha g_i+1,4,1)DD$s for each
$i,\ i=1,2,\dots,N$ and there are super-simple $4-DGDD$s of type $\alpha ^k$ for each $k\in K$ then, there exists a super-simple $2-(\alpha\sum_{i=1}^{N}g_iu_i+1,4,1)DD$.
\end{lemma}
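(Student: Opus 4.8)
The plan is to follow the classical Wilson-type ``weighting'' argument, adapted to directed designs and the super-simple condition. Start with a $\{K\}$-GDD of type ${g_1}^{u_1}\dots {g_N}^{u_N}$ on point set $X$ with group set $\mathcal{G}$ and block set $\mathcal{A}$. Replace every point $x\in X$ by a set $S_x$ of $\alpha$ ``copies'', so that $|S_x|=\alpha$ for all $x$; write $\widetilde{S}=\bigcup_{x\in X}S_x$, which has $\alpha\sum_i g_iu_i$ points. For each block $A=\{a_1,\dots,a_k\}\in\mathcal{A}$ (with $k\in K$), place on the point set $\bigcup_{j}S_{a_j}$ a super-simple $4$-DGDD of type $\alpha^k$ whose groups are exactly the sets $S_{a_1},\dots,S_{a_k}$; such a design exists by hypothesis. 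Collect the blocks of all these $4$-DGDDs into a family $\mathcal{B}_0$. Then for each original group $G\in\mathcal{G}$ of size $g_i$, adjoin a new point $\infty$, and place on $\{\infty\}\cup\bigcup_{x\in G}S_x$ a super-simple $2-(\alpha g_i+1,4,1)DD$ (same $\infty$ for every group), taking its blocks into a family $\mathcal{B}_1$. Set $\mathcal{B}=\mathcal{B}_0\cup\mathcal{B}_1$ on point set $\{\infty\}\cup\widetilde{S}$, which has cardinality $\alpha\sum_i g_iu_i+1$, the right size.

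Next I would verify that $(\{\infty\}\cup\widetilde{S},\mathcal{B})$ is a $2-(v,4,1)DD$. Take an ordered pair $(p,q)$ of distinct points. If neither is $\infty$ and they lie in copies $S_x,S_y$ with $x\ne y$ in the same group $G$, the pair is covered exactly once, inside the $2-(\alpha g_i+1,4,1)DD$ placed on that group; no block of $\mathcal{B}_0$ can cover it because in every DGDD the two groups $S_x,S_y$ are distinct groups, so no block meets both. If $x\ne y$ lie in different groups, there is a unique block $A$ of the GDD containing both $x,y$, hence the pair is covered exactly once, inside the DGDD sitting on $A$ (the GDD structure guarantees existence and uniqueness of $A$); the group-designs in $\mathcal{B}_1$ do not cover it since $x,y$ are in different groups. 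If $x=y$ (both copies in the same $S_x$), the pair is covered exactly once by the unique group-design on the group containing $x$. Finally, pairs involving $\infty$: $(\infty,p)$ with $p\in S_x$, $x\in G$, is covered exactly once inside the group-design on $G$, and never in $\mathcal{B}_0$ since $\infty$ does not occur there. In every case each ordered pair is covered $\lambda=1$ times, so $\mathcal{B}$ is a directed design.

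Then I would check super-simplicity, i.e.\ that the underlying $2-(v,4,2)$ BIBD has any two blocks meeting in at most two points. Two blocks from the same component design (a single DGDD on a block $A$, or a single group-design on $G$) meet in at most two points by the super-simplicity of that component. Two blocks from different DGDDs, sitting on blocks $A\ne A'$ of the GDD, can only share points lying in $S_x$ for $x\in A\cap A'$; since $|A\cap A'|\le 1$ in a GDD, they share at most $|S_x|=\alpha$ points a priori, but in fact within that single $S_x$ each DGDD block uses exactly one point (it is a transversal-type block meeting each group once), so they meet in at most one point. Two blocks from different group-designs, on $G\ne G'$, can only share $\infty$, so at most one point. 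A block from $\mathcal{B}_0$ and one from $\mathcal{B}_1$: the first omits $\infty$ and, restricted to any group $S_x$, uses one point; the second lies within one group $G$ together with $\infty$; their common points lie in $\bigcup_{x\in G}S_x$, and since a DGDD block meets each $S_x$ in at most one point while it can meet at most two of the groups $S_x$ with $x\in G$ (a block has size $4$ and meets $4$ distinct $S_x$'s, at most $\dots$)---here one must argue a bit more carefully: if the GDD block $A$ meets $G$ in a single point $x$ (the generic case, since two points of $A$ cannot lie in the same group $G$), then the DGDD block meets $\bigcup_{x\in G}S_x$ in at most one point, so the intersection with the group-design block is at most one point. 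Assembling these cases gives the super-simple property.

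The main obstacle, and the step deserving the most care, is precisely the last cross-case intersection analysis and the implicit use of the fact that every block of a $4$-DGDD of type $\alpha^k$ is a \emph{transversal} of its groups (meeting each of the $k$ groups in exactly one point)---this is what forces two DGDD-blocks on overlapping GDD-blocks, and a DGDD-block against a group-design block, to intersect in at most one point rather than up to $\alpha$. One should state this transversal property explicitly (it follows from the GDD/DGDD definition: a block cannot contain two points of one group, else the pair from that group would be covered, contradicting the ``no pair inside a group'' condition of a GDD) and then the case checks become routine. The counting of $|\mathcal{B}|$ and the $f$-bound are not needed for this lemma's statement; only existence and the two structural properties (directed design, super-simple) must be established, so once the transversal observation is in place the proof is a finite case analysis with no real computation.
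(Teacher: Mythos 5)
Your proposal is correct and follows essentially the same route as the paper: Wilson's fundamental construction with weight $\alpha$, replacing each GDD block by a super-simple $4$-DGDD of type $\alpha^k$ to obtain a super-simple $4$-DGDD of type $(\alpha g_1)^{u_1}\cdots(\alpha g_N)^{u_N}$, and then filling the holes with a single new point $\infty$ using the super-simple $2-(\alpha g_i+1,4,1)DD$s. The only difference is that you spell out the pair-coverage and block-intersection verification (including the transversal property of DGDD blocks) that the paper leaves implicit; this is a welcome addition, not a deviation.
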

\begin{proof}
{Let $(G,\mathcal{B})$ be a group divisible design with
element set U, blocks of size $k\in K$ and groups of size $g_1,g_2,\dots,$ and $g_N$. We form a super-simple $2-(\alpha\sum_{i=1}^{N}g_iu_i+1,4,1)DD$ on the element set $U\times
Z_{\alpha} \bigcup\{\infty\}$. Give weight $\alpha$ to all of points. That is replace each point
$x\in U$ with $\alpha$ new points $\{x_1,x_2,\dots,x_{\alpha}\}$. Now replace each block $b\in \mathcal{B}$ of size $k\in K$ with a  super-simple $4-DGDD$ of type $\alpha^k$ such that its groups are $\{x_1,x_2,\dots,x_{\alpha}:x\in U\}$ to get a super-simple $4-DGDD$ of type ${\alpha g_1}^{u_1}{\alpha g_2}^{u_2}\dots {\alpha g_N}^{u_N}$. Finally fill in the holes with a new point  $\infty$, using  super-simple
$2-(\alpha g_i+1,4,1)DD$s for $i=1,2,\dots,N$.
}
\end{proof}
In particular, if there is a constant c such that each of the super-simple $4-DGDD$ of type $\alpha^k$ and the super-simple
$2-(\alpha g_i+1,4,1)DD$s has $f\geq c$, then the resulting super-simple $2-(v,4,1)DD$ in Lemma 2.2 also has $f\geq c$.
\begin{theorem}
For all $v\equiv1\ (mod\ 3)$ except $v=7$ there exists a super-simple $2-(v,4,1)DD$ with $f\geq\frac{1}{2}$.
\end{theorem}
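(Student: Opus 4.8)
The plan is to prove Theorem 2.1 by a standard PBD/GDD recursive construction, handling all sufficiently large admissible $v$ by Wilson-type weighting and then verifying the finitely many small cases by direct construction. The admissible values split into the four residue classes $v\equiv 1,4,7,10\ (\mathrm{mod}\ 12)$, equivalently $v=3n+1$ for $n\ge 0$; I would first record the ingredients already available in the excerpt: super-simple $2-(v,4,1)DD$s with $f\ge\frac12$ for $v\in\{10,13,16\}$, the two super-simple $2-(7,4,1)DD$s (which give $f=\frac27$, hence $v=7$ is genuinely excluded), and super-simple $4$-$DGDD$s of types $2^4$, $3^4$, $3^5$, $3^6$, each with $f\ge\frac12$. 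The constant $c=\frac12$ is preserved under both Lemma 1.1 and Lemma 2.2 by the remarks following each, so the entire argument only needs to supply, for each admissible $v$, either a direct design or a $\{K\}$-GDD of an appropriate type together with the matching small DDs and DGDDs.

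First I would dispose of $v\equiv 1\ (\mathrm{mod}\ 3)$ with $v$ large. Take $\alpha=3$ and apply Lemma 2.2 to a $\{4,5,6\}$-GDD (or $4$-GDD) of type $g^u$: since super-simple $4$-$DGDD$s of types $3^4,3^5,3^6$ are in hand and super-simple $2-(3g+1,4,1)DD$s will be available for the relevant small $g$ (bootstrapped from $v=10,13,16$ and a handful of other small orders), weighting produces a super-simple $2-(3\sum g_i u_i+1,4,1)DD$ with $f\ge\frac12$. The well-known existence spectrum for $4$-GDDs and for $\{4,5,6\}$-GDDs of uniform and near-uniform type then covers every residue class $1,4,7,10\pmod{12}$ once $v$ exceeds a modest bound; alternatively one can weight a TD$(5,m)$ or a $\{4,5\}$-GDD and fill holes with $2-(3g_i+1,4,1)DD$s. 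The point is that all DGDD ingredients needed are exactly the types $\alpha^k=3^k$ for $k\in\{4,5,6\}$ listed above, plus the type $2^4$ DGDD for the occasional place where a group of weighted size $2$ appears (e.g. using $\alpha=2$ on a suitable GDD to reach orders $\equiv 1\pmod 6$ not otherwise covered).

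Then I would clean up the finitely many small exceptional orders not reached by the recursion — values such as $v=4,13,16,19,22,25,28,\dots$ up to the threshold — by direct construction: $v=4$ is the single block $0123$ completed by its reverses in the trivial way (or more precisely $2-(4,4,1)DD$ has the one underlying block and one checks the defining-set bound by hand); $v=10,13,16$ are already done in the excerpt; for the remaining small admissible orders I would exhibit base blocks under a cyclic or cyclic-plus-fixed-point automorphism (as was done for $v=13$ with base blocks $0\,1\,3\,9$ and $1\,0\,11\,5$ mod $13$), each time listing the disjoint volume-$2$ directed trades and the cyclical trades that force a defining set of size at least half the blocks. Each such small verification is routine but must be carried out; I would organize them in a table by residue class.

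The main obstacle is not any single deep step but the bookkeeping at the boundary between "handled by weighting" and "handled directly": one must pin down precisely which GDD type is used for each residue class mod $12$, confirm that every group size $g$ arising gives a $2-(3g+1,4,1)DD$ that is itself super-simple with $f\ge\frac12$ (which may require a short induction, feeding earlier outputs of Lemmas 1.1 and 2.2 back in), and make sure no admissible $v$ slips through the gap. A secondary subtlety is checking super-simplicity is actually inherited: Lemma 1.1 and Lemma 2.2 assert this, so once the ingredient designs are verified super-simple the property propagates, but for the hand-built small cases one must explicitly confirm that any two underlying blocks meet in at most two points. Granting these verifications, the theorem follows by assembling the pieces: every $v\equiv 1\ (\mathrm{mod}\ 3)$ with $v\ne 7$ either is one of the finitely many directly constructed orders or arises from Lemma 2.2 applied to an appropriate GDD with the $3^k$ (and $2^4$) super-simple $4$-$DGDD$s and the smaller super-simple $2-(3g+1,4,1)DD$s, and in all cases $f\ge\frac12$.
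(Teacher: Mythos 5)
Your overall strategy is exactly the paper's: take a $PBD(n,\{4,5,6\})$, delete a point to obtain a $\{4,5,6\}$-GDD of type $3^a4^b5^c$, apply Lemma 2.2 with weight $3$ using the super-simple $4$-$DGDD$s of types $3^4,3^5,3^6$, fill the holes with the super-simple $2-(m,4,1)DD$s for $m=10,13,16$, and then treat the orders not reached this way by ad hoc constructions. So there is no divergence of method. The problem is that your proposal stops at the point where the paper's proof actually begins. You never pin down which admissible $v$ escape the recursion (in the paper these come from the exception set $\{7,8,\dots,23\}$ for $PBD(n,\{4,5,6\})$ and are precisely $v\in\{19,22,25,28,31,34,40,43,52,55,67\}$), and for those orders you offer no designs: the explicit block lists and base blocks for $v=19,22,28,31,34,52$, the $4$-GDD weightings for $v=40,43,55,67$, and above all the trade analyses (counting disjoint volume-$2$ directed trades and cyclical trades to force $\lceil s/2\rceil$ blocks into every defining set) that certify $f\geq\frac12$ in each case. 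Calling these ``routine but must be carried out'' does not discharge them; for an existence-plus-lower-bound theorem of this kind, those constructions and verifications \emph{are} the proof, and nothing in your sketch guarantees they can be produced. A correct skeleton with the load-bearing members missing is a genuine gap.

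Two smaller points. First, your treatment of $v=4$ is wrong as stated: the $2-(4,4,1)DD$ consisting of $0123$ and its reverse has an underlying $2-(4,4,2)BIBD$ with two identical blocks meeting in four points, so it is not super-simple; this order cannot be handled the way you describe (the paper itself passes over $v=4$ silently, but you should not introduce a false construction for it). Second, your closing worry about whether super-simplicity and the bound $f\geq c$ propagate through Lemmas 1.1 and 2.2 is already settled by the remarks following those lemmas in the paper, so that part of your plan is fine; the deficit is entirely in the unexecuted explicit constructions.
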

\begin{proof}
{For all $n$ except $n\in A = \{7, 8, 9, 10, 11, 12, 14, 15,
18,19,23\}$ there exists a $PBD(n, \{4, 5, 6\})$ $\cite{mullin}$. We
can delete one point from this $PBD$ to form a $\{4, 5, 6\}-GDD$ of
order $n-1$ of type $3^{a}4^{b}5^{c}$, where a, b, c are
non-negative integers. We then apply lemma 2.2 to this $GDD$ using a
weight of 3 to get a super-simple $4-DGDD$ of type
$9^{a}12^{b}15^{c}$ with $f\geq\frac{1}{2}$. Finally fill in the
holes with a new point  $\infty$, using  super-simple $2-(m,4,1)DD$s
for $m=10,\ 13,\ 16$ with $f\geq\frac{1}{2}$.

Now for the remaining values $v\in\{19,22,25,28,31,34,40,43,52,55,67\}$ we
construct a super-simple $2-(v,4,1)DD$ with $f\geq\frac{1}{2}$ as follows.

\noindent$\bullet\ \ \ v=19$: the following 4-tuples form a
super-simple $2-(19,4,1)DD$.
\begin{center}
\begin{tabular}{|c|c|c|c|c|c|c|} \hline
3 6 4 15   & 2 7 4 17   & 10 3 11 13 & 11 16 5 12 & 14 12 17 8  \\
6 3 16 1   & 5 4 7 16   & 9 3 10 14  & 13 15 12 5 & 6 17 12 13  \\
           &            &            &            & 5 8 17 6    \\
6 11 0 7   & 17 9 4 5   & 3 9 12 7   & 10 12 15 6 &             \\
11 6 8 2   & 16 9 17 0  & 1 8 7 12   & 12 10 9 2  & 2 1 14 6    \\
           &            &            &            & 13 4 1 2    \\
4 0 18 12  & 13 16 7 11 & 8 1 9 11   & 2 9 13 8   & 18 14 1 4   \\
18 0 2 11  & 16 13 6 14 & 9 1 15 16  & 15 13 9 18 &             \\
           &            &            &            & 0 5 1 13    \\
18 7 9 6   & 7 14 18 13 & 8 18 16 15 & 8 4 13 10  & 2 5 0 15    \\
0 4 6 9    & 17 14 7 15 & 2 12 16 18 & 16 4 8 3   & 15 0 8 14   \\
           &            &            &            & 7 10 8 0    \\
12 1 3 0   & 5 14 11 9  & 15 11 17 1 & 14 0 10 16 & 10 7 1 5    \\
13 0 3 17  & 12 4 11 14 & 11 15 10 4 &            &             \\
           &            &            &            &             \\
1 18 10 17 & 14 5 3 2   & 17 11 3 18 & 17 16 2 10 &             \\
6 5 10 18  & 15 7 2 3   & 18 3 8 5   &            &             \\
\hline
\end{tabular}
\end{center}
This  super-simple $2-(19,4,1)DD$ has 22 disjoint directed trades of
volume 2 in the first four columns above and the last column has
three cyclical trades of volume 3, 3 and 5, respectively. So any
defining set for this $DD$ must contain at least two 4-tuples from
each of the cyclical trades of volume 3 and three 4-tuples of the
cyclical trade of volume 5. Hence any defining set must contain at
least $22+2\times2+3=29$ blocks, so for this super-simple
$2-(19,4,1)DD$, we have$f\geq\frac{29}{57}>\frac{1}{2}$.

\noindent$\bullet\ \ \ v=22$: the following base blocks by $(-,+1\
{\rm mod}\ 11)$ form a super-simple $2-(22,4,1)DD$.
$$
\begin{array}{l}
(0 , 0) (0 , 3) (0 , 9) (0 , 10)\  ,\ (1 , 0) (1 , 4) (0 , 9) (0 , 3)\\
(0 , 0) (1 , 0) (1 , 7) (1 , 2)\ \  ,\ (1 , 0) (0 , 0) (1 , 9) (1 , 10)
\\(0 , 0) (0 , 2) (1 , 5) (1 , 8)\ \ ,\ (1 , 0) (1 , 5) (0 , 2) (0 , 6)\\
(1 , 7) (0 , 3) (0 , 0) (1 , 4)
\end{array}
$$
This super-simple $2-(22,4,1)DD$ has 33 disjoint directed trades of
volume 2 in the first three rows above and the last row is a
cyclical trade of volume 11. So any defining set for this $DD$ must
contain at least six 4-tuples of the last row. Hence any defining
set must contain at least $33+6=39$ blocks, so for this super-simple
$2-(22,4,1)DD$, we have $f\geq\frac{39}{77}>\frac{1}{2}$.

\noindent$\bullet\ \ \ v=25$: given base blocks in
$\cite{gronau1992}$ form a super-simple $2-(25,4,1)DD$ with
$f\geq\frac{1}{2}$.

\noindent$\bullet\ \ \ v=28$: the following 4-tuples form a
super-simple $2-(28,4,1)DD$.
\begin{center}
\begin{tabular}{|c|c|c|c|c|c|c|} \hline
1 2 14 16   & 1 8 22 23   & 2 3 19 22   & 6 8 9 14    & 4 7 14 15   \\
21 9 2 1    & 25 4 8 1    & 25 14 3 2   & 27 8 6 3    & 23 21 15 14 \\
            &             &             &             &             \\
12 14 20 24 & 23 22 9 3   & 13 14 19 26 & 3 6 13 17   & 2 6 21 23   \\
16 14 12 10 & 10 9 22 24  & 11 14 13 9  & 27 25 17 13 & 11 10 6 2   \\
            &             &             &             &             \\
6 7 10 12   & 8 4 21 24   & 7 13 8 11   & 7 16 25 27  & 10 11 23 25 \\
24 15 10 7  & 25 24 21 5  & 22 19 11 8  & 17 16 7 4   & 23 11 7 1   \\
            &             &             &             &             \\
2 8 10 15   & 1 3 24 25   & 2 7 18 24   & 16 17 20 22 & 1 7 9 17    \\
17 9 10 8   & 18 10 3 1   & 12 8 7 2    & 20 17 14 1  & 26 9 7 5    \\
            &             &             &             &             \\
6 15 24 27  & 0 3 10 14   & 9 11 21 26  & 14 17 21 25 & 5 7 19 20   \\
27 24 16 9  & 24 11 3 0   & 27 21 11 4  & 26 21 17 6  & 20 19 9 4   \\
            &             &             &             &             \\
4 3 9 16    & 1 6 20 26   & 5 6 22 25   & 3 12 21 27  & 13 16 23 24 \\
26 15 3 4   & 22 16 6 1   & 19 14 6 5   & 19 17 12 3  & 24 23 17 2  \\
            &             &             &             &             \\
0 2 12 17   & 1 4 11 12   & 2 4 25 26   & 11 17 19 24 & 3 5 11 15   \\
20 16 2 0   & 26 20 12 11 & 26 25 22 10 & 17 11 18 5  & 16 13 5 3   \\
            &             &             &             &             \\
4 6 18 19   & 0 5 24 26   & 3 8 18 20   & 18 15 17 23 & 0 4 20 23   \\
24 12 6 4   & 26 24 14 8  & 24 22 20 18 & 22 17 15 0  & 25 23 20 6  \\
            &             &             &             &             \\
0 8 19 25   & 5 8 12 16   & 2 11 20 27  & 4 13 22 27  & 18 14 4 0   \\
25 19 18 7  & 23 8 5 0    & 27 20 10 5  & 27 22 14 7  & 14 18 11 22 \\
            &             &             &             &             \\
8 17 26 27  & 3 7 23 26   & 0 1 13 15   & 10 13 20 21 & 9 15 20 25  \\
27 26 1 0   & 26 23 19 16 & 24 19 13 1  & 21 20 7 3   & 20 15 8 13  \\
            &             &             &             &             \\
15 18 9 6   & 2 5 9 13    & 6 0 11 16   & 12 15 22 26 & 13 7 0 6    \\
0 9 18 27   & 22 4 5 2    & 25 15 16 11 & 15 12 5 1   & 0 7 21 22   \\
            &             &             &             &             \\
16 15 19 21 & 12 13 18 25 & 1 10 19 27  & 5 14 23 27  & 5 4 10 17   \\
21 19 10 0  & 22 21 13 12 & 27 19 15 2  & 27 23 18 12 & 23 13 10 4  \\
            &             &             &             &             \\
1 5 18 21   & 10 16 18 26 & 25 12 9 0   &             &             \\
21 18 16 8  & 26 18 13 2  & 9 12 19 23  &             &             \\
\hline
\end{tabular}
\end{center}
This super-simple  $2-(28,4,1)DD$ contains 63 disjoint directed
trades. Hence, any defining set of this super simple directed design
must contain at least one 4-tuple of each trade. So for this
super-simple $2-(28,4,1)DD$ we have
$f\geq\frac{63}{126}=\frac{1}{2}$.

\noindent$\bullet\ \ \ v=31$: the following base blocks by $(+1\
{\rm mod}\ 31)$ form a super-simple $2-(31,4,1)DD$ with
$f\geq\frac{1}{2}$.
$$
\begin{array}{l}
2\ 0\ 4\ 1\ \ \ ,\ 1\ 4\ 19\ 9\\
4\ 0\ 20\ 13\ ,\ 13\ 20\ 8\ 30\ ,\ 14\ 28\ 8\ 20
\end{array}
$$
This super-simple $2-(31,4,1)DD$ has 31 disjoint directed trades of
volume 2 in the first row above and the second row is a cyclical
trade of volume 93. So any defining set for this $DD$ must contain
at least 47, 4-tuples of the cyclical trade. Hence any defining set
must contain at least $31+47=78$ blocks, so for this super-simple
$2-(31,4,1)DD$ we have $f\geq\frac{78}{155}>\frac{1}{2}$.

\noindent$\bullet\ \ \ v=34$: the following base blocks by $(+1\
{\rm mod}\ 34)$ form a super-simple
 $2-(34,4,1)DD$ with $f\geq\frac{1}{2}$.
$$
\begin{array}{l}
0\ 22\ 1\ 24\ ,\ 31\ 29\ 24\ 1 \\
25\ 1\ 19\ 0\ ,\ 19\ 1\ 15\ 22\ ,\ 23\ 15\ 1\ 20 \\
0\ 17\ 8\ 25
\end{array}
$$
This super-simple $2-(34,4,1)DD$ has 34 disjoint directed trades of
volume 2 in the first row above and the second row is a cyclical
trade of volume 102. So any defining set for this $DD$ must contain
at least 51, 4-tuples of the second row. The last row form a
cyclical trade of volume 34. Hence any defining set must contain at
least $34+51+17=102$ blocks, so for this super-simple $2-(34,4,1)DD$
we have $f\geq\frac{102}{204}\geq\frac{1}{2}$. $\ \ \ \ \ \ \ \ \ \
\ \ \ \ \ \ \ \ \ \ \ \ \ \ \ \ \ \ \ \ \ \ \ \ \ \ \ \ \ \ \ \ \ \
\ \ \ \ \ \ \ \ \ \ \ \ \ \ \ \ \ \ \ \ \ \ \ \ \ \ \ \ \ \ \ \ \ \
\ \ \ \ \ \ \ \ \ \ \ \ \ \ \ \ \ \ \ \ \ \ \ \ \ \ \ \ \ \ \ \ \ \
\ \ \ \ \ \ \ \ \ \ \ \ \ \ \ \ \ \ \ \ \ \ \ \ \ \ \ \ \ \ \ \ \ \
\ \ \ \ \ \ \ \ \ \ \ \ \ \ \ \ \ \ \ \ \ \ \ \ \ \ \ \ \ \ \ \ \ \
\ $

\noindent$\bullet\ \ \ v=40$: begin with a $4-GDD (5^4)$ $\cite{mullin}$. We apply
lemma 2.2 using a weight 2, using a super-simple $4-DGDD$ of type
$(2^4)$ with $f\geq\frac{1}{2}$, to get a super-simple $4-DGDD$ of
type $10^{4}$ with $f\geq\frac{1}{2}$. Now fill in the holes of size 10
with a super-simple $2-(10,4,1)DD$ with $f\geq\frac{1}{2}$. $\
 \ \ \ \ \ \ \ \ \ \ \ \ \ \ \
\ \ \ \ \ \ \ \ \ \ \ \ \ \ \ \ \ \ \ \ \ \ \ \ \ \ \ \ \ \ \ \ \ \
\ \ \ \ \ \ \ \ \ \ \ \ \ \ \ \ \ \ \ \ \ \ \ \ \ \ \ \ \ \ \ \ \ \
\ \ \ \ \ \ \ \ \ \ \ \ \ \ \ \ \ \ \ \ \ \ \ \ \ \ \ \ \ \ \ \ \ \
\ \ \ \ \ \ \ \ \ \ \ \ \ \ \ \ \ \ \ \ \ \ \ \ \ \ \ \ \ \ \ \ \ \
\ \ \ \ \ \ \ \ \ \ \ \ \ \ \ \ \ \ \ \ \ \ \ \ \ \ \ \ \ $

\noindent$\bullet\ \ \ v=43$: begin with a $4-GDD (2^7)$ $\cite{mullin}$. We apply
lemma 2.2 using a weight 3, using a super-simple $4-DGDD$ of type
$(3^4)$ with $f\geq\frac{5}{9}$, to get a super-simple $4-DGDD$ of
type $6^{7}$ with $f\geq\frac{1}{2}$. Finally fill in the hole with
a new point $\infty$, using a super-simple $2-(7,4,1)DD$. This yield
a super-simple $2-(43,4,1)DD$ with
$f\geq\frac{154}{301}>\frac{1}{2}$. $\ \ \ \ \ \ \ \ \ \ \ \ \ \ \ \
\ \ \ \ \ \ \ \ \ \ \ \ \ \ \ \ \ \ \ \ \ \ \ \ \ \ \ \ \ \ \ \ \ \
\ \ \ \ \ \ \ \ \ \ \ \ \ \ \ \ \ \ \ \ \ \ \ \ \ \ \ \ \ \ \ \ \ \
\ \ \ \ \ \ \ \ \ \ \ \ \ \ \ \ \ \ \ \ \ \ \ \ \ \ \ \ \ \ \ \ \ \
\ \ \ \ \ \ \ \ \ \ \ \ \ \ \ \ \ \ \ \ \ \ \ \ \ \ \ \ \ \ \ \ \ \
\ \ \ \ \ \ \ \ \ \ \ \ \ \ \ \ \ \ \ \ \ \ \ \ \ \ \ \ \ $

\noindent$\bullet\ \ \ v=52$: the following base blocks form a
super-simple $(52,4,1)DD$ with $f\geq\frac{1}{2}$.
$$
\begin{array}{l}
0\ 5\ 1\ 3\ \ \ \ ,\ 48\ 5\ 0\ 45\ \ \ \ \ \ \ (+1\ {\rm mod}\ 52)\\
30\ 6\ 0\ 13\ ,\ 13\ 0\ 23\ 42\ \ \ \ \ (+1\ {\rm mod}\ 52)\\
0\ 6\ 21\ 37\ ,\ 4\ 37\ 21\ 45\ \ \ \ \ (+1\ {\rm mod}\ 52)\\
8\ 38\ 20\ 0\ ,\ 34\ 9\ 0\ 20\ \ \ \ \ \ \ (+1\ {\rm mod}\ 52)\\
27\ 1\ 0\ 26\ ,\ 26\ 0\ 25\ 51\ \ \ \ \ (+2\ {\rm mod}\ 52)
\end{array}
$$
$\ \ \ \ \ \ \ \ \ \ \ \ \ \ \ \ \ \ \ \ \ \ \ \ \ \ \ \ \ \ \ \ \ \ \ \ \ \ \ \ \ \ \ \ \ \
\ \ \ \ \ \ \ \ \ \ \ \ \ \ \ \ \ \ \ \ \ \ \ \ \ \ \ \ \ \ \ \ \ \ \ \ \ \ \ \ \ \ \ \ \ \ \
\ \ \ \ \ \ \ \ \ \ \ \ \ \ \ \ \ \ \ \ \ \ \ \ \ \ \ \ \ \ \ \ \ \ \ \ \ \ \ \ \ \ \ \ \ \ \
\ \ \ \ \ \ \ \ \ \ \ \ \ \ \ \ \ \ \ \ \ \ \ \ \ \ \ \ \ \ \ \ \ \ \ \ \ \ \ \ \ $

\noindent$\bullet\ \ \ v=55$: a $RBIBD(16,4,1)$ has 5 parallel
classes. For two parallel classes add a new point to each of the
blocks in that parallel class. This yields a $\{4 , 5\}-GDD$ of type
$4^{4}2^{1}$. We apply lemma 2.2 using a weight 3, using a
super-simple $4-DGDD$ of type $(3^4)$ with $f\geq\frac{5}{9}$ and a
super-simple $4-DGDD$ of type $(3^5)$ with $f\geq\frac{1}{2}$, to
get a super-simple $4-DGDD$ of type $12^{4}6^{1}$ with
$f\geq\frac{1}{2}$. Finally fill in the holes with a new point
$\infty$, using a super-simple $(13,4,1)DD$ with $f\geq\frac{1}{2}$
and a super-simple $2-(7,4,1)DD$. $\ \ \ \ \ \ \ \ \ \ \ \ \ \ \ \ \
\ \ \ \ \ \ \ \ \ \ \ \ \ \ \ \ \ \ \ \ \ \ \ \ \ \ \ \ \ \ \ \ \ \
\ \ \ \ \ \ \ \ \ \ \ \ \ \ \ \ \ \ \ \ \ \ \ \ \ \ \ \ \ \ \ \ \ \
\ \ \ \ \ \ \ \ \ \ \ \ \ \ \ \ \ \ \ \ \ \ \ \ \ \ \ \ \ \ \ \ \ \
\ \ \ \ \ \ \ \ \ \ \ \ \ \ \ \ \ \ \ \ \ \ \ \ \ \ \ \ \ \ \ \ \ \
\ \ \ \ \ \ \ \ \ \ \ \ \ \ \ \ \ \ \ \ \ \ \ \ \ \ \ \ $

\noindent$\bullet\ \ \ v=67$: begin with a $4-GDD (6^{4}9^{1})$ $\cite{mullin}$.
Applying lemma 2.2 using a weight 2, we obtain a super-simple
$2-(67,4,1)DD$ with $f\geq\frac{1}{2}$. }
\end{proof}

%%%%%%%%%%%%%%%%%%%%%%%%%%%%%%%%%%%%%%%%%%%%%%%%%%%%%%%%%%%%%%%
\section{Asymptotic results }  %sec
In previous section, we showed that for all admissible values of
$v$, except $v=7$, there exists a super-simple $2-(v,4,1)DD$ with
$f\geq\frac{1}{2}$. In this section we show that these results can
be improved for $2-(v,4,1)DD$s. And  we prove that for all
sufficiently large admissible $v$, there exists a $2-(v,4,1)DD$ with
$f\geq\frac{5}{8}-\frac{c}{v}$ for suitable positive constant c. For
this  result we need a directed group divisible design with block
size four $(4-DGDD)$ of type $(2^4)$ with $f\geq\frac{5}{8}$.

A $4-DGDD$ of type $(2^4)$ can be constructed with groups $\{1,2\},\
\{3,4\},\ \\ \{5,6\},\ \{7,0\}$, and with the following blocks.

$$
\begin{array}{llll}
blocks: &5\ 1\ 0\ 3\ \ \  & 3\ 7\ 1\ 5\ \ \  & 4\ 0\ 2\ 5  \\
        &6\ 0\ 1\ 4\ \ \  & 4\ 1\ 7\ 6\ \ \  & 5\ 2\ 7\ 4  \\
        &2\ 3\ 0\ 6\ \ \  & 6\ 7\ 3\ 2\ \ \  &
\end{array}
$$
Each of first two columns above is a cyclical trade of volume 3,
hence any defining set of this directed group divisible design must
contain at least two 4-tuples from each of the first two columns.
The 4-tuples in the last column above form a trade and one of them
must be in any defining set. Therefore any defining set of this
$4-DGDD$ must contain at least 5 blocks, so it has $f\geq\frac{5}{8}$.

\begin{lemma}
For all $v\equiv1\ (mod\ 12)$, $v\geq61$, there exists a
$2-(v,4,1)DD$ with $f\geq\frac{5}{8}-\frac{13}{8v}$.
\end{lemma}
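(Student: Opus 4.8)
The plan is to build $\mathcal D$ on $v=12u+1$ points (with $u\geq 5$, which is exactly the range $v\geq 61$) by Wilson's fundamental construction with weight $2$, following Lemma~2.2 verbatim except that super-simplicity is neither required nor claimed: the construction of a $2-(v,4,1)DD$ out of a GDD together with ingredient DGDDs and ingredient directed designs goes through with no super-simplicity hypothesis on any ingredient. First I would take a $4-GDD$ of type $6^{u}$, which exists for all $u\geq 4$ \cite{mullin}. Then I would give weight $2$ to every point and replace each of its $3u(u-1)$ blocks with a copy of the $4-DGDD$ of type $(2^4)$ with $f\geq\frac{5}{8}$ constructed at the beginning of this section, producing a $4-DGDD$ of type $12^{u}$. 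Finally I would adjoin one new point $\infty$ and fill each hole of size $12$ with a copy of the super-simple $2-(13,4,1)DD$ with $f\geq\frac{1}{2}$ from Section~2; the outcome is a $2-(12u+1,4,1)DD$ $\mathcal D=(V,\mathcal B)$.

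Before estimating $f$ I would pin down the counts. Since $\mathcal B$ is the disjoint union of the $3u(u-1)$ copies of the $4-DGDD$ of type $(2^4)$ (each with $8$ blocks) and the $u$ filling designs (each with $26$ blocks), we get $|\mathcal B|=24u(u-1)+26u=24u^{2}+2u=\frac{v(v-1)}{6}$, which also confirms that $\mathcal D$ is a genuine $2-(v,4,1)DD$. Here the block sets really are pairwise disjoint: two copies of the $4-DGDD$ of type $(2^4)$ come from two blocks of the GDD, which meet in at most one point and hence share at most two points of $V$, too few to hold a common block; a block of a filling design lies inside one hole together with $\infty$, whereas a block of a DGDD copy has its four points in four different holes and avoids $\infty$; and two filling designs meet only in $\infty$.

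The core of the argument is a lower bound on the size of a smallest defining set $S$ of $\mathcal D$. I would first observe that if $T=T_{1}-T_{2}$ is a $(v,4,2)$ directed trade with $T_{1}$ contained in one of the small ingredients $X$ (a copy of the $4-DGDD$ of type $(2^4)$, or a filling $2-(13,4,1)DD$), then all pairs covered by $T_{1}$, hence all pairs covered by $T_{2}$, hence all four entries of every block of $T_{2}$, lie in the point set of $X$; so $T$ is a directed trade of the small design $X$. By Proposition~1.1 this forces $S\cap X$ to be a defining set of $X$, so $|S\cap X|$ is at least the smallest-defining-set size of $X$. For a copy of the $4-DGDD$ of type $(2^4)$ that number is $5$ (the two volume-$3$ cyclical trades in its first two columns force two of its blocks apiece, the volume-$2$ trade in its third column forces one more, and these three trades occupy three different columns), and for a filling $2-(13,4,1)DD$ it is $13$ (that design has $26$ blocks and $f\geq\frac{1}{2}$). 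As the ingredient block sets are pairwise disjoint, these local bounds add, and $|S|\geq 5\cdot 3u(u-1)+13u=15u^{2}-2u$.

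Putting the two estimates together, $f\geq\frac{15u^{2}-2u}{24u^{2}+2u}=\frac{15u-2}{24u+2}$, and substituting $v=12u+1$ rewrites the right-hand side as $\frac{5v-13}{8v}=\frac{5}{8}-\frac{13}{8v}$, which is the claimed inequality. The step I expect to be the main obstacle is the one made precise above — that restricting a defining set of $\mathcal D$ to one ingredient yields a defining set of that ingredient, so that the forced blocks of all ingredients may be tallied simultaneously — but this is exactly the mechanism already used for the column-and-copy counting in the explicit small designs of Section~2, so it should go through without trouble.
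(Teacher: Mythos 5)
Your proposal is correct and follows essentially the same route as the paper: start from a $4-GDD(6^{k})$, give weight $2$, replace blocks by the $4-DGDD$ of type $(2^4)$ with $f\geq\frac{5}{8}$, fill the holes plus $\infty$ with the $2-(13,4,1)DD$ with $f\geq\frac{1}{2}$, and add the local defining-set bounds to get $\frac{15k^{2}-2k}{24k^{2}+2k}=\frac{5}{8}-\frac{13}{8v}$. The only difference is that you spell out the disjointness and trade-localization argument that the paper leaves implicit.
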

\begin{proof}
{For all $k>4$ there exists a $4-GDD (6^k)$ $\cite{mullin}$. Let
$(G,\mathcal{B})$ be a group divisible design with element set U,
blocks of size 4 and groups of size 6. We replace each group $g\in
G$ with a $2-(13,4,1)DD$ with $f\geq\frac{1}{2}$ on $g\times
Z_2\bigcup\{\infty\}$ and each block $b\in \mathcal{B}$ with $4-DGDD$
of type $(2^4)$ with $f\geq\frac{5}{8}$ on $b\times Z_2$, such that
its groups are $\{x\}\times Z_2$, $\{y\}\times Z_2$, $\{z\}\times
Z_2$ and $\{w\}\times Z_2$. Since a $4-GDD (6^k)$ has $3k(k-1)$
blocks, so a $2-(v,4,1)DD$ has
\begin{center}
 $\large f\geq(\frac{5}{8}.24k(k-1)+\frac{1}{2}.26k)/(\large\frac{(12k+1)(12k)}{6}).$
\end{center}
Simplifying gives $\large f\geq\frac{5}{8}-\frac{13}{8v}$. }
\end{proof}
\begin{lemma}
For all $v\equiv4\ (mod\ 12)$, $v\geq64$, there exists a
$2-(v,4,1)DD$ with $f\geq\frac{5}{8}-\frac{1}{2v}$.
\end{lemma}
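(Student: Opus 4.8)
The plan is to imitate the proof of Lemma 3.1, but to choose the ingredients so that the whole design is assembled out of copies of the $4-DGDD$ of type $(2^4)$ with $f\geq\frac{5}{8}$ displayed at the start of this section, with no auxiliary $2-(m,4,1)DD$ diluting the ratio. Since $v\equiv 4\ ({\rm mod}\ 12)$ we may write $v=4b$ with $b\equiv 1\ ({\rm mod}\ 3)$, and $v\geq 64$ forces $b\geq 16$. First I would take a $4-GDD$ of type $2^{b}$, which has $\frac{b(b-1)}{3}$ blocks (such designs exist for all $b\equiv 1\ ({\rm mod}\ 3)$ with $b\geq 7$; see \cite{mullin}). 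Giving every point weight $2$ and, as in the weighting construction of Lemma 2.2, replacing each block by a copy of the $4-DGDD$ of type $(2^4)$ with $f\geq\frac{5}{8}$ (its four groups being the fibres above the four points of the block) yields a $4-DGDD$ of type $4^{b}$ with $\frac{8b(b-1)}{3}$ blocks. Finally I would fill in each of the $b$ fibres, a hole of size $4$, with a copy of a $2-(4,4,1)DD$ (a $4$-tuple on those four points together with its reverse), adding $2b$ blocks. Since the $4-DGDD$ covers every ordered pair of points in distinct fibres exactly once and each $2-(4,4,1)DD$ covers every ordered pair inside its fibre exactly once, the result is a $2-(4b,4,1)DD=2-(v,4,1)DD$, and indeed $\frac{8b(b-1)}{3}+2b=\frac{8b^{2}-2b}{3}=\frac{v(v-1)}{6}=|\mathcal{B}|$.

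For the lower bound on $f$ I would argue piece by piece via Proposition 1.1. The block set of the constructed $DD$ splits as the disjoint union of the $\frac{b(b-1)}{3}$ copies of the $4-DGDD$ of type $(2^4)$ and the $b$ copies of the $2-(4,4,1)DD$. A directed trade $T_{1}-T_{2}$ with $T_{1}$ inside one such copy forces $T_{2}$ to live on the same point set and to touch only the ordered pairs covered by $T_{1}$ (a block of $T_{2}$ through a foreign point would contribute uncancellable pairs), so $T_{1}-T_{2}$ is a directed trade of the full $DD$; consequently a defining set $S$ of the full $DD$ must contain a block of every directed trade of each copy, i.e. $S$ restricted to a copy is a defining set of that copy. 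Hence $S$ contains at least $5$ of the $8$ blocks of each $4-DGDD$ of type $(2^4)$ (this is exactly $f\geq\frac{5}{8}$ for that design) and at least $1$ of the $2$ blocks of each $2-(4,4,1)DD$, so
$$|S|\ \geq\ 5\cdot\frac{b(b-1)}{3}+b\ =\ \frac{5b^{2}-2b}{3},$$
whence
$$f\ \geq\ \frac{5b^{2}-2b}{8b^{2}-2b}\ =\ \frac{5b-2}{8b-2}\ =\ \frac{5}{8}-\frac{3}{8(v-1)}\ \geq\ \frac{5}{8}-\frac{1}{2v},$$
the last inequality holding because $\frac{3}{8(v-1)}\leq\frac{1}{2v}$ for all $v\geq 4$.

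The only genuine obstacle is securing the input $4-GDD$ of type $2^{b}$: the obvious necessary conditions would also admit $b=4$, but a $4-GDD$ of type $2^{4}$ does not exist (it amounts to an $OA(4,4,2,2)$, which violates the Rao bound), so this construction cannot reach $v=16$. This is harmless, since $v\geq 64$ means $b\geq 16$, well inside the range where type $2^{b}$ is known to exist, and the finitely many admissible values $v\equiv 4\ ({\rm mod}\ 12)$ with $v<64$, namely $v\in\{16,28,40,52\}$, are already handled (with $f\geq\frac12$, which suffices for the main theorem) in Theorem 2.1. One must also be careful to weight with the $4-DGDD$ of type $(2^4)$ of this section, the one shown to have $f\geq\frac{5}{8}$, and not the super-simple one of Section 2 whose $f$ is known only to be $\geq\frac12$; super-simplicity plays no role here. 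Everything else — the pair count and the final elementary inequality — I expect to be routine.
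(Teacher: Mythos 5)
Your construction is exactly the paper's: it too starts from a $4$-GDD of type $2^{3k+1}$ (your $2^{b}$ with $b=3k+1$), gives weight $2$, replaces each block by the $4$-DGDD of type $(2^4)$ with $f\geq\frac{5}{8}$, fills the inflated groups with $2-(4,4,1)DD$s, and reaches the same bound $f\geq\frac{5k+1}{8k+2}=\frac{5}{8}-\frac{3}{8(v-1)}>\frac{5}{8}-\frac{1}{2v}$. The proposal is correct and essentially identical to the paper's proof, merely adding more explicit care about the existence range of the input GDD and about why a defining set of the whole design must restrict to a defining set of each ingredient copy.
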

\begin{proof}
{For all $k>4$ there exists a $4-GDD (2^{3k+1})$ $\cite{mullin}$.
Let $(G,\mathcal{B})$ be a group divisible design with element set
U, blocks of size 4 and groups of size 2. We replace each group
$g\in G$ with a $2-(4,4,1)DD$ on $g\times Z_2$ and each block $b\in
\mathcal{B}$ with $4-DGDD$ of type $(2^4)$ with $f\geq\frac{5}{8}$ on
$b\times Z_2$, such that its groups are $\{x\}\times Z_2$,
$\{y\}\times Z_2$, $\{z\}\times Z_2$ and $\{w\}\times Z_2$. Since a
$4-GDD (2^{3k+1})$ has $k(3k+1)$ blocks, so a $2-(v,4,1) DD$ has
\begin{center}
 $\large f\geq(\frac{5}{8}.8k(3k+1)+\frac{1}{2}.4(3k+1))/(\frac{(12k+4)(12k+3)}{6}).$
\end{center}
Simplifying gives $\large
f\geq\frac{5}{8}-\frac{1}{8(4k+1)}>\frac{5}{8}-\frac{1}{2v}$. }
\end{proof}
\begin{lemma}
For all $v\equiv7\ (mod\ 12)$, $v\geq67$, there exists a
$2-(v,4,1)DD$ with
$f\geq\frac{5}{8}-\frac{13}{8v}-\frac{21}{2v(v+11)}$.
\end{lemma}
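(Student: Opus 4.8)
The plan is to imitate the proof of Lemma 3.1, but to start from a group divisible design carrying one extra short group, so that the weighted‑and‑filled design lands in the class $v\equiv 7\ ({\rm mod}\ 12)$. Concretely, for every $k\ge 5$ there is a $4$-GDD of type $6^{k}3^{1}$ \cite{mullin}; it has $6k+3$ points, it covers $\frac{(6k+3)(6k+2)}{2}-15k-3=18k^{2}$ pairs, and since each of its blocks covers six pairs it has exactly $3k^{2}$ blocks. As $k$ runs through $5,6,7,\dots$ the value $v:=2(6k+3)+1=12k+7$ runs through exactly the admissible values with $v\equiv 7\ ({\rm mod}\ 12)$ and $v\ge 67$.

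Next I would build the required $2-(v,4,1)DD$ on $U\times Z_{2}\cup\{\infty\}$, where $U$ is the point set of the GDD, in the style of Lemmas 3.1 and 3.2. Replace each block $b$ by a copy of the $4$-DGDD of type $2^{4}$ with $f\ge\frac58$ constructed just before Lemma 3.1 (which has eight blocks), placed on $b\times Z_{2}$ with groups $\{x\}\times Z_{2}$ for $x\in b$; replace each group $g$ of size $6$ by a $2-(13,4,1)DD$ with $f\ge\frac12$ (which has $26$ blocks) on $g\times Z_{2}\cup\{\infty\}$; and replace the single group $g_{0}$ of size $3$ by a $2-(7,4,1)DD$ (which has $7$ blocks, with $f=\frac27$) on $g_{0}\times Z_{2}\cup\{\infty\}$. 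The result is a $2-(v,4,1)DD$ with $8\cdot 3k^{2}+26k+7=24k^{2}+26k+7=v(v-1)/6$ blocks, as it must be.

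For the defining‑set bound I would use the localisation principle behind Proposition 1.1: every directed trade contained in one of the ingredient designs is again a directed trade of the whole $DD$, and the ingredient designs occupy pairwise disjoint parts of the block set, so a smallest defining set of the whole $DD$ meets each ingredient in at least as many blocks as that ingredient's own smallest defining set requires. Thus each of the $3k^{2}$ copies of the $4$-DGDD forces at least five blocks, each of the $k$ copies of the $2-(13,4,1)DD$ forces at least thirteen, and the $2-(7,4,1)DD$ forces at least two, so a smallest defining set has at least $15k^{2}+13k+2$ blocks and $f\ge(15k^{2}+13k+2)/(24k^{2}+26k+7)$.

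It remains to massage this into the stated form. Factoring numerator and denominator, $f\ge\frac{(3k+2)(5k+1)}{(2k+1)(12k+7)}$; substituting $v=12k+7$ (so $v-1=6(2k+1)$, $v+1=4(3k+2)$, $5k+1=\frac{5v-23}{12}$) rewrites the right‑hand side as $\frac58-\frac{13v+23}{8v(v-1)}$, equivalently $\frac58-\frac{13}{8v}-\frac{9}{2v(v-1)}$. Finally $\frac{9}{2v(v-1)}\le\frac{21}{2v(v+11)}$ exactly when $9(v+11)\le 21(v-1)$, i.e. $v\ge 10$, which holds here, so $f\ge\frac58-\frac{13}{8v}-\frac{21}{2v(v+11)}$. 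I expect the only real friction to be bookkeeping: confirming from \cite{mullin} that $4$-GDDs of type $6^{k}3^{1}$ exist over the whole range $k\ge 5$ (this, rather than $k\ge 4$, is presumably what forces the hypothesis $v\ge 67$), and pushing the short algebra of the last paragraph through cleanly. The one genuinely structural ingredient, that the defining‑set deficiencies of the pieces add up inside the composite design, is routine given Proposition 1.1 and the remarks following it, and it is already used in just this way in Lemmas 3.1 and 3.2.
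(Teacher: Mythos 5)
Your proof is correct, and it follows the paper's general weighting-and-filling template (weight a master GDD by $2$, replace blocks by the $4$-DGDD of type $2^4$ with $f\geq\frac{5}{8}$, fill each inflated group plus $\infty$ with a small directed design), but it runs on a genuinely different decomposition: the paper starts from a $4$-GDD of type $6^k9^1$ \cite{gennian}, obtaining $v=12k+19$ and filling the size-$9$ group with the $2-(19,4,1)DD$ (which forces $29$ of its $57$ blocks), whereas you start from a $4$-GDD of type $6^k3^1$, obtaining $v=12k+7$ and filling the size-$3$ group with a $2-(7,4,1)DD$ (which forces only $2$ of its $7$ blocks). Your block and pair counts, the localization of trades via Proposition 1.1, and the algebra all check out, and your choice buys three small advantages: (i) the error term you get, $\frac{9}{2v(v-1)}$, is sharper than the $\frac{21}{2v(v-1)}$ that the paper's own simplification produces -- indeed the paper's proof, read literally, establishes $\frac{5}{8}-\frac{13}{8v}-\frac{21}{2v(v-1)}$, which is marginally \emph{weaker} than the stated $\frac{5}{8}-\frac{13}{8v}-\frac{21}{2v(v+11)}$, while your bound implies the stated one for all $v\geq10$; (ii) despite the poor ratio $\frac{2}{7}$ of the $2-(7,4,1)DD$, the short group is so small that it costs less overall than the paper's size-$9$ group; (iii) with $k\geq5$ you reach $v=67$ directly, whereas the paper's ``for all $k>4$'' with $v=12k+19$ literally starts at $v=79$. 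The only external fact you should pin down is the existence of $4$-GDDs of type $6^k3^1$ for all $k\geq5$; this is standard (Ge and Ling \cite{gennian} give $4$-GDDs of type $6^um^1$ for $u\geq4$, $m\equiv0\ ({\rm mod}\ 3)$, $0\leq m\leq 3(u-1)$), though citing \cite{gennian} rather than \cite{mullin} would be the more precise reference.
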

\begin{proof}
{For all $k>4$ there exists a $4-GDD (6^k 9^1)$ $\cite{gennian}$.
Let $(G,\mathcal{B})$ be a group divisible design with element set
U, blocks of size 4 and groups of size 6 and a single group of size
9. We replace each group $g\in G$ of size 6 and 9 with a
$2-(13,4,1)DD$ and a  $2-(19,4,1)DD$ with $f\geq\frac{1}{2}$ on
$g\times Z_2\bigcup\{\infty\}$, respectively, and each block $b\in
\mathcal{B}$ with $4-DGDD$ of type $(2^4)$ with $f\geq\frac{5}{8}$ on
$b\times Z_2$, such that its groups are $\{x\}\times Z_2$,
$\{y\}\times Z_2$, $\{z\}\times Z_2$ and $\{w\}\times Z_2$. Since a
$4-GDD (6^k 9^1)$ has $3k(k+2)$ blocks, so a  $2-(v,4,1)DD$ has
\begin{center}
 $\large f\geq(\frac{5}{8}.24k(k+2)+\frac{1}{2}.26k + 29)/(\frac{(12k+19)(12k+18)}{6}).$
\end{center}
Simplifying gives $\large
f\geq\frac{5}{8}-\frac{13}{8v}-\frac{21}{2v(v-1)}$. }
\end{proof}
\begin{lemma}
For all $v\equiv10\ (mod\ 12)$, $v\geq70$ there exists a
$2-(v,4,1)DD$ with $f\geq\frac{5}{8}-\frac{1}{4v}-\frac{1}{2v}$.
\end{lemma}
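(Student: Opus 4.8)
The plan is to follow the template of Lemmas~3.3--3.5: apply Wilson's fundamental construction (Lemma~2.2) with weight~$2$ to a suitable master group divisible design, attach to each block a copy of the $4-DGDD$ of type $(2^4)$ with $f\geq\frac{5}{8}$ given at the beginning of this section, and then fill in the resulting holes with small directed designs. Since $v=12k+10$ is even, no common point at infinity is available (filling holes through an infinity point produces an odd order), so every hole of the weighted design must be filled \emph{directly}: a group of size~$g$ yields a hole of size $2g$, fillable by a $2-(2g,4,1)DD$ only when $2g\equiv1\ ({\rm mod}\ 3)$, i.e. $g\equiv2\ ({\rm mod}\ 3)$. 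This dictates the shape of the master design.

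Concretely, I would write $v=12k+10$, so that $v\geq70$ gives $k\geq5$, and take a $4-GDD$ of type $2^{3k}5^1$ as master design; it has $2(3k)+5=6k+5$ points, so weighting by $2$ yields $12k+10=v$ points. Its existence for all $k\geq5$ follows from the known existence results for $4-GDD$s of type $2^n5^1$ with $n\equiv0\ ({\rm mod}\ 3)$ $\cite{mullin}$ (the divisibility conditions hold for $n=3k$), and such a $GDD$ has $k(3k+4)$ blocks. Applying Lemma~2.2 with $\alpha=2$, I replace each block $b$ by a $4-DGDD$ of type $(2^4)$ on $b\times Z_2$ with $f\geq\frac{5}{8}$ (which has $8$ blocks, and whose two internal cyclical trades of volume~$3$ and internal trade of volume~$2$ force at least $2+2+1=5$ of its blocks into any defining set), replace each group $g$ of size~$2$ by a $2-(4,4,1)DD$ on $g\times Z_2$ (which has $2$ blocks lying in a directed trade of volume~$2$, so $f=\frac12$ and at least $1$ of them is forced), and replace the group of size~$5$ by a super-simple $2-(10,4,1)DD$ with $f\geq\frac12$ from Section~2 (which has $15$ blocks and smallest defining set at least~$8$); no infinity point is used. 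The result is a $2-(v,4,1)DD$ on $12k+10=v$ points, and one checks $\frac{v(v-1)}{6}=8\,k(3k+4)+2\cdot 3k+15=24k^2+38k+15$, as it should.

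For the lower bound on $f$, the block set of the final design partitions into the $k(3k+4)$ copies of the $4-DGDD(2^4)$, the $3k$ copies of the $2-(4,4,1)DD$, and the single $2-(10,4,1)DD$; these pieces are mutually block-disjoint, since a copy of the $4-DGDD(2^4)$ has its four points spread over four distinct groups $g\times Z_2$ whereas a hole-filling block lies inside one such group, and distinct copies lie on distinct blocks. Hence the directed trades internal to these pieces are pairwise disjoint, and by the trade characterization of defining sets (Proposition~1.1) any defining set of the final design has at least $5\,k(3k+4)+3k+8=15k^2+23k+8$ blocks, so $f\geq\frac{15k^2+23k+8}{24k^2+38k+15}$. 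A short simplification, as in the previous lemmas, turns this into $f\geq\frac{5}{8}-\frac{3(v+12)}{8v(v-1)}$, which for $v\geq14$ --- in particular for every admissible $v\geq70$ --- is at least $\frac{5}{8}-\frac{3}{4v}=\frac{5}{8}-\frac{1}{4v}-\frac{1}{2v}$, as required.

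The only genuinely non-routine ingredient is the master $4-GDD$ of type $2^{3k}5^1$ for all $k\geq5$; granted that, the remainder is pure bookkeeping with block counts and disjoint trades, exactly as in Lemmas~3.3--3.5. If such a $GDD$ were missing for a few small $k$, the corresponding finitely many orders $v$ could instead be settled by an explicit base-block construction, as was done for $v\in\{19,22,\dots,67\}$ in Theorem~2.1.
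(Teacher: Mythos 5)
Your proposal is correct and follows essentially the same route as the paper: weight $2$ applied to a $4$-$GDD$ of type $2^{3k}5^{1}$, blocks replaced by the $4$-$DGDD(2^{4})$ with $f\geq\frac{5}{8}$, groups filled directly (no infinity point) by a $2-(4,4,1)DD$ and a $2-(10,4,1)DD$, and the same trade-disjointness count $5k(3k+4)+3k+8$ over $24k^{2}+38k+15$ blocks, which simplifies to the stated bound. The only differences are cosmetic (you cite the general GDD existence result rather than the Ge--Ling reference, and you spell out the simplification more explicitly than the paper does).
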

\begin{proof}
{For all $k>4$ there exists a $4-GDD (2^{3k} 5^1)$ $\cite{gennian}$.
Let $(G,\mathcal{B})$ be a group divisible design with element set
U, blocks of size 4 and groups of size 2 and   a single group of
size 5. We replace each group $g\in G$ of size 2 and 5 with a
$2-(4,4,1)DD$ and a $2-(10,4,1)DD$ with $f\geq\frac{1}{2}$ on
$g\times Z_2$, respectively, and each block $b\in \mathcal{B}$ with
$4-DGDD$ of type $(2^4)$ with $f\geq\frac{5}{8}$ on $b\times Z_2$,
such that its groups are $\{x\}\times Z_2$, $\{y\}\times Z_2$,
$\{z\}\times Z_2$ and $\{w\}\times Z_2$. Since a $4-GDD (2^{3k}
5^1)$ has $k(3k+4)$ blocks, so a $2-(v,4,1) DD$ has
\begin{center}
 $\large f\geq(\frac{5}{8}.8k(3k+4)+\frac{1}{2}.4(3k)+8)/ (\frac{(12k+10)(12k+9)}{6}).$
\end{center}
Simplifying gives $\large
f\geq\frac{5}{8}-\frac{1}{4v}-\frac{k+4}{2v(4k+3)}$ or $\large
f>\frac{5}{8}-\frac{1}{2v}$. }
\end{proof}

\begin{theorem}
For all $\epsilon > 0$ there exists $v_0(\epsilon)$ such that for
all admissible $v>v_0$ there exists a $2-(v,4,1)DD$ with
$f\geq\frac{5}{8}-\frac{c}{v}$ where $c=\frac{13}{8}+\epsilon$.
\end{theorem}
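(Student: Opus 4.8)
The plan is to assemble Theorem 3.6 directly from the four residue-class lemmas (Lemmas 3.2--3.5) already proved in this section, together with Theorem 2.1 which handles the finitely many small cases. First I would observe that every admissible $v$, i.e. every $v\equiv 1\ (\mathrm{mod}\ 3)$, falls into exactly one of the residue classes $1,4,7,10\ (\mathrm{mod}\ 12)$, since these are precisely the residues mod $12$ that are $\equiv 1\ (\mathrm{mod}\ 3)$. For each of these four classes, the corresponding lemma produces, for all sufficiently large $v$ in that class, a $2$-$(v,4,1)DD$ with $f\geq \frac58-\frac{c'}{v}$ for an explicit constant $c'$; reading off the four bounds, the worst (largest) leading constant in the $1/v$ term is $\frac{13}{8}$, coming from Lemmas 3.2 and 3.4, while Lemmas 3.3 and 3.5 give the smaller constant $\frac12$.

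Next I would make the uniformization precise. Given $\epsilon>0$, set $c=\frac{13}{8}+\epsilon$. For the classes $v\equiv 1,7\ (\mathrm{mod}\ 12)$ the lemmas already give $f\geq\frac58-\frac{13}{8v}-O(1/v^2)$; the lower-order term $-\frac{21}{2v(v+11)}$ (resp. $-\frac{21}{2v(v-1)}$) in Lemma 3.4 is eventually dominated by $\frac{\epsilon}{v}$, so there is a threshold $v_1(\epsilon)$ beyond which $f\geq \frac58-\frac{c}{v}$ holds in these classes. For the classes $v\equiv 4,10\ (\mathrm{mod}\ 12)$, Lemmas 3.3 and 3.5 give $f\geq\frac58-\frac{1}{2v}\geq\frac58-\frac{c}{v}$ outright, since $\frac12<\frac{13}{8}<c$. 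Taking $v_0(\epsilon)=\max\{v_1(\epsilon),67,70\}$ (or more precisely the maximum of $v_1(\epsilon)$ and the explicit lower bounds $61,64,67,70$ appearing in the four lemmas) ensures that for every admissible $v>v_0$ the appropriate lemma applies and yields the claimed bound. The finitely many admissible $v$ below this threshold are irrelevant to the asymptotic statement, but if one wants a clean uniform statement one can note that Theorem 2.1 already furnishes a $2$-$(v,4,1)DD$ (even super-simple) with $f\geq\frac12$ for all admissible $v\neq 7$, which is a fortiori $\geq\frac58-\frac{c}{v}$ once $v$ is large enough that $\frac{c}{v}\geq\frac18$.

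I do not expect any serious obstacle here: the theorem is essentially a bookkeeping corollary of Lemmas 3.2--3.5, and the only thing to check is that the four residue classes mod $12$ cover all residues $\equiv 1\ (\mathrm{mod}\ 3)$ and that $\frac{13}{8}$ is indeed the maximum of the four leading constants $\{\tfrac{13}{8},\tfrac12,\tfrac{13}{8},\tfrac12\}$. The mildest care is needed in Lemmas 3.4 and 3.5, where there is a secondary negative term of order $1/v^2$; one must state $v_0$ large enough that this term, together with any slack from replacing $v-1$ or $v+11$ by $v$ in the denominators, is absorbed into the $\epsilon/v$ budget. This is a one-line estimate: $\frac{21}{2v(v\pm 11)}\leq \frac{\epsilon}{v}$ as soon as $v\geq \frac{21}{2\epsilon}+11$, say. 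With that, the proof is a short case split.

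\begin{proof}
{Every admissible value $v\equiv 1\ (\mathrm{mod}\ 3)$ satisfies exactly one of $v\equiv 1,4,7,10\ (\mathrm{mod}\ 12)$. Put $c=\frac{13}{8}+\epsilon$.

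If $v\equiv 4\ (\mathrm{mod}\ 12)$ then by Lemma 3.3 (for $v\geq 64$) there is a $2$-$(v,4,1)DD$ with $f\geq\frac58-\frac{1}{2v}\geq\frac58-\frac{c}{v}$, since $\frac12<c$. If $v\equiv 10\ (\mathrm{mod}\ 12)$ then by Lemma 3.5 (for $v\geq 70$) there is a $2$-$(v,4,1)DD$ with $f>\frac58-\frac{1}{2v}\geq\frac58-\frac{c}{v}$.

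If $v\equiv 1\ (\mathrm{mod}\ 12)$ then by Lemma 3.2 (for $v\geq 61$) there is a $2$-$(v,4,1)DD$ with $f\geq\frac58-\frac{13}{8v}>\frac58-\frac{c}{v}$. If $v\equiv 7\ (\mathrm{mod}\ 12)$ then by Lemma 3.4 (for $v\geq 67$) there is a $2$-$(v,4,1)DD$ with
\begin{center}
$f\geq\dfrac58-\dfrac{13}{8v}-\dfrac{21}{2v(v-1)}.$
\end{center}
As soon as $v-1\geq \frac{21}{2\epsilon}$, that is $v\geq 1+\frac{21}{2\epsilon}$, we have $\frac{21}{2v(v-1)}\leq\frac{\epsilon}{v}$, so $f\geq\frac58-\frac{13}{8v}-\frac{\epsilon}{v}=\frac58-\frac{c}{v}$.

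Hence, setting
\begin{center}
$v_0(\epsilon)=\max\Bigl\{70,\ 1+\dfrac{21}{2\epsilon}\Bigr\},$
\end{center}
for every admissible $v>v_0(\epsilon)$ the relevant lemma above applies and produces a $2$-$(v,4,1)DD$ with $f\geq\frac58-\frac{c}{v}$, where $c=\frac{13}{8}+\epsilon$.}
\end{proof}
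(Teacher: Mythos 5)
Your proof is correct and is exactly the argument the paper intends: the paper states this theorem with no written proof, treating it as an immediate consequence of the four residue-class lemmas, and your case split over $v\equiv 1,4,7,10\ (\mathrm{mod}\ 12)$ with the absorption of the $\frac{21}{2v(v\pm\cdot)}$ term into $\frac{\epsilon}{v}$ supplies precisely that bookkeeping. The only cosmetic discrepancy is labelling (the paper's lemmas are numbered 3.1--3.4, not 3.2--3.5).
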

%

%%%%%%%%%%%%%%%%%%%%%%%%%%%%%%%%%%%%%%%%%%%%%%%%%%%%

%

\end{document}